\documentclass[12pt,reqno]{article}
\usepackage{amssymb, amsmath}
\topmargin -2cm \textheight 25.8cm \evensidemargin -1cm
\oddsidemargin  0 cm \textwidth 17.cm \flushbottom
\usepackage{amssymb,amsthm}
\newtheorem{Theorem}{Theorem}
\newtheorem{Remark}{Remark}

\newtheorem{Definition}{Definition}
\numberwithin{Definition}{section}
\numberwithin{Theorem}{section}
\numberwithin{Lemma}{section}
\numberwithin{Proposition}{section}

\numberwithin{Corollary}{section}
\numberwithin{Example}{section}
\numberwithin{Remark}{section}
\linespread{1.3}
\begin{document}
\title{Some remarks on weak generalizations of minima and quasi efficiency}
\author{Triloki Nath\thanks{Department of Mathematics and Statistics, Dr. Harisingh Gour Vishwavidyalaya, Sagar, Madhya Pradesh-470003, INDIA, Email- tnverma07@gmail.com} }

\date{}	
\maketitle

\begin{abstract}
  In this note, we remark, with sufficient mathematical rigor, that many weak generalizations of the usual minimum available in the literature are not true generalizations. Motivated by the  Ekeland Variational Principle, we provide, first time, the criteria for weaker generalizations of the usual minimum. Further, we show that the quasi efficiency, recently used in Bhatia et al. (Optim. Lett. 7, 127-135 (2013)) and introduced in  Gupta et al. ( Bull. Aust. Math. Soc. 74, 207-218 (2006)) is not a true generalization of the usual efficiency. Since the former paper relies heavily on the results of later one, so we discuss the later paper. We show that the necessary optimality condition is a consequence of the local Lipschitzness and sufficiency result is trivial in the later paper. Consequently, the duality results of the same paper are also inconsistent.
  \end{abstract}
\textbf{Keywords.}
 Approximate minima; Vector optimization; Optimality condition.\\
\textbf{Mathematics subject classification(2000).} 90C29; 49K05.
\section{Introduction}\label{sec:intro}
 The notions of  maxima and minima of a real-valued function are ubiquitous in analysis and optimization. So, naturally, researchers  are interested in their generalizations. What properties should have the weaker generalizations of the usual minimum? is a fundamental question in mathematics at large.\\

\hspace*{0.5cm} The aim of this note is two fold, first we answer the question of weaker generalizations of the usual minimum and then we discuss some recent generalizations of the usual minimum and the usual efficiency. Any true weaker generalization is one which reduces to its original form under the original circumstances. In other words, a true weaker form is one which is implied by the original notion, and if it differs at some point, then it would not imply any condition much stronger than the implications by the original notion. For example, suppose a standard notion implies local Lipschitz continuity then a weaker notion would not imply differentiability but may imply almost everywhere differentiability (thanks to Rademacher's theorem). Thus, in the generalization process, some features of original notions are getting involved. Consequently, sometimes, we compromise for weaker generalizations with some seemingly stronger conditions but equivalent to the implicit conditions implied by the original one. These points are also clarified in section  \ref{sec:wglm}. Some recent generalizations in analysis and optimization are unable to recover the original. In fact, some of these exact points have already been noticed in \cite{Borwein16, Zalinescu14}. To be more explict, we quote the words of Borwein \cite{Borwein16}:
\begin{center}
``\emph{A generalisation is an extension of a known result to cover more cases (a unification), and/or weaken hypotheses and/or strengthen conclusions. The role of examples and counter-examples is to validate and substantiate one or more of these three roles}."
\end{center}
\hspace*{0.5 cm} Recently, Bhatia et al. \cite{Bhatia13} derived sufficient optimality conditions for quasi efficient solutions employed on some  new generalized convexity assumptions. Since the results established in the paper \cite{Bhatia13} heavily rely on the results of  Gupta at al. \cite{Bhatia06} pioneering  a weaker generalized efficiency called quasi efficiency for  vector optimization problems. Further,  in order to understand whether the results of  \cite{Bhatia13} make sense/correct, it is necessary that the notions and results of base paper \cite{Bhatia06} must be meaningful and correct. Therefore we confine our discussions on the source paper \cite{Bhatia06} only. We show  that the notions of quasi efficiency and local quasi efficiency are not relevant and are merely the consequences of Lipschitz and local Lipschitz  continuity. Theorem 2  of  \cite{Bhatia06} has a gap, which has been corrected as Theorem \ref{thm2} in the present note. It seems that the lower semicontinuity assumption is missing  in Theorem 3 of \cite{Bhatia06}, and we show that the assumption makes it a trivial result. Consequently, the duality results are affected. \\

Rest of this note is organized as follows. Section \ref{prel} contains some basic notions from the nonsmooth analysis. In section \ref {sec:wglm}, we define the appropriate criteria for a weaker generalization of minimum with sufficient mathematical support, and we compare the approximate solutions of \cite{Lorid82} with our criteria. The quasi efficiency defined in \cite{Bhatia06} is shown to be a consequence of Lipschitz continuity and a result is corrected in section \ref{sec:mainresult}. Finally, some conclusions are drawn in the last section.
\section{Preliminaries}
\label{prel}
\hspace*{0.5 cm} Throughout this note, $X$ will denote a nonempty subset of $\mathbb R^n$ and  $B(x_\circ,\delta):=\{x \in \mathbb{R}^n |~ \|  x-x_\circ\|< \delta\}$ is open ball. For a better understanding, we recall the following definitions and results from \cite{Clarkebk1}.
\begin{Definition}\rm
A function $h: X \rightarrow \mathbb R\cup \{+\infty\}$ is said to be  \textit{lower semicontinuous} at $x_\circ \in X$, if for every $\epsilon > 0$ there exists $\delta(x_\circ,\epsilon)> 0$ such that
$$h(x)\geqslant h(x_\circ)- \epsilon~ ~ ~\forall ~x \in B(x_\circ,\delta)\cap X. $$
\end{Definition}
 \begin{Definition}\rm
A function $h: X \rightarrow \mathbb R$ is said to be \textit{ locally Lipschitz} at $x \in X$, if there exist $L (x) \geqslant 0$ and  $\delta > 0$ such that
$$|h(y) - h(z)| \leqslant L \|y-z\|~ ~ ~\forall ~y, z  \in B(x,\delta)\cap X. $$
The smallest such $L$ is called (best) local Lipschitz constant for $h$ at $x$. Obviously, every number greater than the local Lipschitz constant works in the above inequality.\\
  \hspace*{0.5 cm} The function $h$ is said to be Lipschitz over $X$ if there exists $L \geqslant 0$(independent of the point) such that the above inequality holds for all $y, z  \in  X.$
  \end{Definition}
\begin{Definition} \rm
  Let  $h: X \rightarrow \mathbb R$ be a locally Lipschitz function at $x \in X$. The \textit{Clarke generalized directional derivative }of $h$ at the point $x$ and in the direction $v\in \mathbb R^n$, denoted by $h^\circ(x; v)$, is given by
$$h^\circ(x; v)=\limsup_{{z\rightarrow x},~ {\lambda\downarrow 0}} \frac{h (z+\lambda v)- h(z)}{\lambda} $$

and the \textit{Clarke generalized subdifferential} of $h$ at $x$, denoted by $\partial h(x)$, is given by
$$\partial h(x)= \left\{d \in \mathbb R^n| h^\circ(x; v)\geqslant  d^tv~ \forall v \in \mathbb R^n\right\}.$$

\end{Definition}
 \begin{Definition}\rm
  The \textit{Clarke tangent cone}  of $X$ at $x$ is a closed convex cone given as
$$T_X(x)= \left\{v \in X|~ d^{\circ}_X (x;v)= 0 \right\},$$
where $d_X$ is distance function on $\mathbb R^n$ defined by $d_X (y)= \inf \left\{\left\|y-x\right\| |~ x\in X\right\}$, which is globally Lipschitz.\\
The \textit{Clarke normal cone } of $X$ at $x$ is (negative) polar cone of $T_X(x)$ given as
$$N_X(x)=(T_X(x))^{-}:= \left\{d \in \mathbb R^n|~ d^t v \leqslant 0 ~~\forall~ v\in T_X(x)\right\}.$$
  \end{Definition}
  \begin{Remark}\rm
  By definition of Clarke directional derivative, it follows that  $d^{\circ}_{X \cap B(x,\delta)} (x;v)=d^{\circ}_X (x;v)$ for all $x\in X$. Consequently, $N_{X \cap B(x,\delta)}(x)= N_{X}(x)$ holds for all $x\in X$.
  \end{Remark}

\section{Weak Generalizations of Minimum}
\label{sec:wglm}
Let us recall that a point $x_{\circ}\in \mathbb{R}^n$ is said to be a point of the minimum of $f:\mathbb{R}^n \rightarrow\mathbb{R}\cup \{+\infty\}$ if $f(x)\geqslant f(x_{\circ})$ for all $x \in \mathbb{R}^n$. Consequently, $f(x)\geqslant f(x_{\circ})-\epsilon$ for all $x \in \mathbb{R}^n$ and for all $\epsilon>0$. Hence, we first note that $f$ is always lower semicontinuous at the point of minima. Secondly, the existence of minimum ensures for $f$ to be bounded below,  with  $f(x_\circ)$ as the greatest lower bound. This simple observation might be the main motivation to Ekeland for the following  elegant result, see \cite{Ekeland}.
\begin{Theorem}\rm (\textbf{Ekeland Variational Principle or EVP})
\label{thm2}
Let  $f:\mathbb{R}^n \rightarrow\mathbb{R}\cup \{+\infty\}$ be a lower semicontinuous function which is bounded below and is not identically $+\infty$. Let $\epsilon > 0$ and $x_{\circ}\in \mathbb{R}^n$ be such that $f(x_{\circ})\leqslant \displaystyle\inf_{x \in \mathbb{R}^n}f(x)+\epsilon$. Then, for any $\lambda>0$ there exists $x_{\lambda} \in \mathbb{R}^n$ such that
\item ~~{\rm (i)}$~~ f(x_{\lambda}) \leqslant f(x_{\circ})$.
\item ~~{\rm (ii)}$~~  \|x_{\lambda}-x_{\circ}\| \leqslant \lambda.$
\item ~~{\rm (iii)}$~~  f(x_{\lambda}) \leqslant f(x) + \dfrac{\epsilon}{\lambda}\|x- x_{\lambda}\|~~~ \forall~ x \in \mathbb R^n$.
\end{Theorem}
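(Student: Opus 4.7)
The plan is to use Ekeland's original iterative construction, based on an auxiliary partial order tailored to the perturbation parameter $\epsilon/\lambda$. Define on $\mathbb{R}^n$ the relation
$$y \preceq x \iff f(y) + \tfrac{\epsilon}{\lambda}\|y-x\| \leqslant f(x).$$
Reflexivity is immediate; antisymmetry follows by adding the two defining inequalities (which force $\|y-x\|=0$); transitivity follows from the triangle inequality. For each $x$ with $f(x)<+\infty$, set $S(x):=\{y:y\preceq x\}$; then $S(x)$ is nonempty (it contains $x$) and closed, since $f$ is lower semicontinuous and $\|\cdot\|$ is continuous.

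Next I would construct inductively a sequence $(x_n)$ with $x_0:=x_\circ$: given $x_n$, choose $x_{n+1}\in S(x_n)$ satisfying
$$f(x_{n+1}) \leqslant \tfrac{1}{2}\bigl(f(x_n) + \inf_{S(x_n)} f\bigr).$$
By transitivity $S(x_{n+1}) \subseteq S(x_n)$, and the quantity $a_n := f(x_n) - \inf_{S(x_n)} f$ satisfies $a_{n+1} \leqslant a_n/2$, hence $a_n \to 0$. For any $y \in S(x_n)$ one has $(\epsilon/\lambda)\|y-x_n\| \leqslant f(x_n)-f(y) \leqslant a_n$, so $\mathrm{diam}(S(x_n)) \to 0$. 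By completeness of $\mathbb{R}^n$ and closedness of the $S(x_n)$, Cantor's intersection theorem yields a unique point $x_\lambda$ with $\bigcap_n S(x_n) = \{x_\lambda\}$.

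To close the argument I would read off (i)–(iii) from the properties of $x_\lambda$. Since $x_\lambda \in S(x_0) = S(x_\circ)$, the defining inequality gives both $f(x_\lambda) \leqslant f(x_\circ)$ (that is (i)) and
$$\tfrac{\epsilon}{\lambda}\|x_\lambda-x_\circ\| \leqslant f(x_\circ) - f(x_\lambda) \leqslant f(x_\circ) - \inf_{\mathbb{R}^n} f \leqslant \epsilon,$$
which gives (ii). For (iii), suppose toward contradiction that some $x$ fails the inequality, i.e.\ $f(x) + (\epsilon/\lambda)\|x-x_\lambda\| < f(x_\lambda)$; then $x \preceq x_\lambda$ with $x \neq x_\lambda$. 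Since $x_\lambda \in S(x_n)$ means $x_\lambda \preceq x_n$, transitivity yields $x \preceq x_n$, i.e.\ $x \in S(x_n)$ for every $n$. Thus $x \in \bigcap_n S(x_n) = \{x_\lambda\}$, contradicting $x \neq x_\lambda$.

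The main obstacle is the diameter estimate $\mathrm{diam}(S(x_n)) \to 0$: one must exploit both the choice factor $1/2$ in the descent step and the relation $\inf_{S(x_{n+1})} f \geqslant \inf_{S(x_n)} f$ to get geometric decay of $a_n$, and then convert a bound on $f(x_n)-f(y)$ into a norm bound through the coefficient $\epsilon/\lambda$ that appears in the definition of $\preceq$. The lower semicontinuity is used precisely to guarantee closedness of each $S(x_n)$, which is exactly what is needed to apply Cantor's intersection theorem and extract the limit point $x_\lambda$.
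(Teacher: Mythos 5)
The paper does not actually prove this theorem: it is quoted from Ekeland's 1974 paper \cite{Ekeland} and used as a black box, the only accompanying remark being that the statement is trivial when $f\equiv+\infty$ or $\epsilon=0$. Your argument is the classical proof of the principle via the order $y\preceq x \iff f(y)+\frac{\epsilon}{\lambda}\|y-x\|\leqslant f(x)$ together with a Cantor-intersection argument on the nested closed sets $S(x_n)$, and it is correct and complete: the geometric decay $a_{n+1}\leqslant a_n/2$ (which uses both the halving step and $\inf_{S(x_{n+1})}f\geqslant\inf_{S(x_n)}f$), the diameter bound $\mathrm{diam}\bigl(S(x_n)\bigr)\leqslant 2(\lambda/\epsilon)a_n$, and the derivation of (i)--(iii) from $x_\lambda\in S(x_\circ)$ and from $\bigcap_n S(x_n)=\{x_\lambda\}$ all go through. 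Two minor points worth recording: antisymmetry of $\preceq$ as you state it only holds on $\mathrm{dom}\,f$ (two points at which $f=+\infty$ dominate each other), but this is harmless because the construction never leaves $S(x_\circ)\subseteq\{f\leqslant f(x_\circ)\}\subseteq\mathrm{dom}\,f$ and antisymmetry is in fact never invoked in your argument; and the existence of the point $x_{n+1}$ with $f(x_{n+1})\leqslant\frac12\bigl(f(x_n)+\inf_{S(x_n)}f\bigr)$ uses that $\inf_{S(x_n)}f$ is finite, which is exactly where the hypothesis that $f$ is bounded below enters, alongside its use in (ii).
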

Note:- If $f:= + \infty$ or $\epsilon =0$, then the statement is trivial.\\

Before discussing the generalizations motivated by EVP, we must focus on the two simple properties of usual minimum: (i) lower semicontinuity, and (ii) greatest lower bound. In what follows, the following may be the criteria to qualify for a weaker minimum.

\begin{Definition} \rm
\label{wglm}
(\textbf{Weaker Generalized Minimum} ) A  point $x_{\circ}\in \mathbb{R}^n$ is said to be a \textit{weaker generalized  minimum} or WGM of $f:\mathbb{R}^n \rightarrow\mathbb{R}\cup \{+\infty\}$, if the following hold\\
(i) If $x_\circ$ is usual minimum then it is WGM.\\
(ii) The WGM implies that $f$ has a finite lower bound.\\
(iii) The WGM may imply lower semicontinuity at $x_\circ$, but not continuity.\\
(iv) If $l < f(x_\circ)$, then $l$ cannot be an upper bound for $f$ in any deleted neighbourhood of $x_\circ$.
\end{Definition}
\begin{Remark}
\label{rmrk-wgmm}\rm
 The condition (iii) of WGM is compatible with the class of lower semicontinuous functions which is the usual assumption on a class of functions in minimization problems.
\end {Remark}

Consider the following scalar problem (MP):\\
$~~~~~~~~~~~~~~~~~~~~~~~~ \min f(x),~ ~~~{\rm subject~ to}~~~ x \in X$ \hfill(MP)\\
$~~~~~~$where $f:\mathbb{R}^n \rightarrow\mathbb{R}\cup \{+\infty\}$.\\


To the best of our knowledge, the following WGM, generally referred to as \textit{approximate minimum} in the literature, is the first notion due to Loridan\cite{Lorid82} called $\epsilon-$\textit{solution} (we call it $\epsilon-$\textit{minimum}) for $(MP)$.  This notion is of course  motivated by the assumption in EVP.
\begin{Definition}\rm  \cite{Lorid82}.
\label{approx-min}
Let $\epsilon \geqslant 0$ be given, then a point $x_{\circ}(\epsilon) \in X$ is called an $\epsilon-$\textit{minimum} of $(MP)$ if
\begin{eqnarray}	
	f(x_{\circ}) \leqslant f(x) + \epsilon~~~\forall~x \in X.
\end{eqnarray}
The $x_{\circ}(\epsilon)$ signifies that  $x_{\circ}$ depends on $\epsilon$.
\end{Definition}
\begin{Remark}
\label{rmrk-1}
 If $\epsilon =0$, then $\epsilon-$minimum becomes the usual notion of minimum.
\end {Remark}
\begin{Remark}\rm
\label{rmrk-2}
 If $f$ is lower semicontinuous then one can verify that $\epsilon-$minimum qualifies for  WGM given in Definition \ref{wglm}. On the other hand, if $f$ is not lower semicontinuous then $\epsilon-$minimum may violate condition (iv) of WGM, e.g. consider $f: \mathbb R\rightarrow  \mathbb R$ defined by  $f(0)= 1$ and $f(x)=0$ otherwise, is not lower semicontinuous at $x_{\circ}=0$, but it is a $2-$minimum. Clearly, for $l=\frac{1}{2}$, there is a deleted neighbourhood of $x_\circ$ where $l$ is an upper bound for $f$. Thus, the condition (iv) is violated in this case. Hence, condition (iv) is an essential component for weaker generalization, which can not be violated by the lower semicontinuous functions.
\end {Remark}
The condition (iii) of EVP motivated Loridan \cite{Lorid82} to introduce $\epsilon-$quasi minimum (originally, $\epsilon-$\emph{quasi solution}).

\begin{Definition}
\label{qmin} \rm \cite{Lorid82}.
Let $\epsilon \geqslant 0$ be given, then a point $x_{\circ}(\epsilon) \in X$ is said to be an $\epsilon-$\textit{quasi minimum} of $(MP)$ if
\begin{eqnarray}
\label{qmn}	
	f(x_{\circ}) \leqslant f(x) + \sqrt{\epsilon} \| x- x_{\circ} \|~~~\forall~x \in X.
\end{eqnarray}
\end{Definition}

Let us examine Definition \ref{qmin} for WGM.
\begin{Remark}\rm
\label{rmrk-3}
For a given $\epsilon >0$, the $\epsilon-$quasi minimum does not qualify for WGM. Indeed,  the condition of  $\epsilon-$quasi minimum implies lower semicontinuity. For, let $\epsilon_1>0$ be arbitrary then we can choose $\delta < \dfrac{\epsilon_1}{\sqrt{\epsilon}}$, so that (\ref{qmn}) implies
\begin{eqnarray}
\label{qmn-1}	
	f(x_{\circ} ) \leqslant f(x) + \epsilon_1 ~~\forall~x \in X \cap B(x_\circ, \delta)
\end{eqnarray}
Thus $f$ is lower semicontinuous at $x_\circ$.
On the other hand, consider  $f(x)= - \sqrt{x}$ for $x \geqslant 0$ and $f(x)= -x$ for $x < 0$, is continuous at $x_{\circ}=0$, but not an $\epsilon-$quasi minimum point for $f$, for any $\epsilon>0$.\\
Nevertheless, we can not say that condition (iii) of WGM is violated because $\epsilon-$quasi minimum  does not imply continuity, e.g. the function $f: \mathbb R \rightarrow \mathbb R$ defined by $f(x):=x$ for $x \leqslant 0$ and $f(x):=1$ otherwise, has 1-quasi minimum. Thus, it is easy to see that all the  conditions of  WGM except (ii)  are satisfied for $\epsilon-$quasi minimum.\\
\hspace*{0.5cm}For $\epsilon>0$, consider  $f: \mathbb R\rightarrow  \mathbb R$ defined by $f(x)= x$ for $x \geqslant 0$ and $f(x)= \sqrt{\epsilon} ~x$ for $x < 0$, then $x_{\circ}=0$, is an $\epsilon-$quasi minimum point for $f$. But, clearly, $f$ is not bounded below. Hence, condition (ii) of WGM is violated.\\
\hspace*{0.5cm}Thus, $\epsilon-$quasi minimum alone does not share the property of being  $f$ bounded below, which is the main characteristic of the usual minimum. Thus, from the above discussions, it is clear that $\epsilon-$quasi minimum alone is not a suitable generalization of the usual global minimum. Hence, $\epsilon-$quasi minimum does not qualify for  WGM.
\end {Remark}
\begin{Remark}\rm
\label{rmrk-6}
By definition, it follows that every point of the domain of a lower semicontinuous function is a local  $\epsilon-$ minimum. Hence, the local $\epsilon-$minimum can not be a characteristic of a minimization problem. Instead, the global $\epsilon-$minimum makes sense and is a true generalization of the usual minimum for a class of lower semicontinuous function.
\end {Remark}

It may be noted that the Remark 3.1 of \cite{Lorid82} is superfluous, which says that every $\epsilon-$quasi minimum is  a local $\epsilon-$minimum: not a surprising one, it is trivial. Indeed, if $f$ is lower semicontinuous then every point is a local $\epsilon-$minimum. So, the $\epsilon-$quasi minimum is  also a local $\epsilon-$minimum. One may insist that function need not be lower semicontinuous, it is funny because we already have pointed out in Remark \ref{rmrk-3} that the $\epsilon-$quasi minimum implies lower semicontinuity at that point. In minimization problems, the weakest,  we can do,  is lower semicontinuity and to ensure the infimum (not necessarily minimum) finite lower bound condition is must for $f$. Thus, using EVP the $\epsilon-$quasi minimum exists near global $\epsilon-$minimum. Therefore, considering only $\epsilon-$quasi minimum which need  not be global $\epsilon-$minimum is not significant in minimization problems. That might be the reason for Loridan \cite{Lorid82} to consider the \emph{regular approximate $\epsilon-$solution} (see Definition 3.3 of \cite{Lorid82}) which is $\epsilon-$minimum and $\epsilon-$quasi minimum both. We must note that the $\epsilon-$minimum implies finite lower bound of the function whereas the $\epsilon-$quasi minimum implies the lower semicontinuity at that point. It is easy to verify that the regular approximate $\epsilon-$solution is WGM.\\\\
We close this section with the final remark.
\begin{Remark}\rm
\label{rmrk-4}
If the function $f$ is lower semicontinuous and bounded below then $\epsilon-$quasi minimum exists by EVP. We must note, that  the conclusion of EVP is not the condition (iii) only, the condition (i)  and (ii) can not be ignored in any generalization. Most importantly condition (ii), which says that $x_\lambda$ must be at most $\lambda$ distant  from the $\epsilon-$ minimum, is missing in $\epsilon-$quasi minimum.
\end {Remark}


  \section{Quasi efficiency and Main Results}\label{sec:mainresult}
In \cite{Bhatia06}, Authors consider the following vector optimization problem $(VP)$:
$$
\left.%
\begin{array}{lcl}
\min~f(x)=(f_1(x),..,f_p(x))\\
{\rm subject~ to }~g_j(x) \leqslant 0, j = 1,...m,
\end{array}%
\right\}~~~~~~~~~~~~~~~~~~~~~~~~~~~~~~~~~~~~~~~~~~~~~(VP)
$$
where $ f_i, g_j :X\rightarrow \mathbb{R},~i=1,...p, j=1,..,m,$  are functions on a nonempty subset $X$ of  $\mathbb R^n$.  Let us denote the feasible set of $(VP)$ by $\Omega:=\left\{x \in X | g_j(x) \leqslant 0, j=1,..m\right\}$.\\
\hspace*{0.5 cm} Recall the following from \cite{Bhatia06}.
\begin{Definition}\rm
 A feasible point $x_{\circ} \in \Omega$  is said to be a \textit{local efficient solution}  of $(VP)$ if there exists $\delta >0$ such that for any $x \in \Omega \cap B(x_{\circ},\delta)$ the following can not hold
 \begin{eqnarray*}
\label{les}	
	f_i(x) &\leqslant& f_i(x_{\circ})   ~~~\forall~  i=1,2,...,p,\\
	f_r(x) & < & f_r(x_{\circ})   ~~~{\rm for ~some ~} r.
\end{eqnarray*}
\end{Definition}

The following weak generalized efficiency is probably motivated by vector notion of $\epsilon-$ quasi minimum \cite{Lorid82}.
\begin{Definition} \rm \cite{Bhatia06}.
\label{quasi-1}
 A feasible point $x_{\circ} \in \Omega$  is said to be a \textit{local quasi efficient solution}  (or  \textit{quasi efficient solution} ) of $(VP)$ if there exist $\alpha \in int(R^p_+)$ and $\delta >0$ such that for any $x \in \Omega \cap B(x_{\circ},\delta)$ (or $x \in \Omega$) the following can not hold
 \begin{eqnarray*}
\label{les}	
	f_i(x) &\leqslant& f_i(x_{\circ}) -\alpha_i \|x-x_{\circ}\|  ~~~\forall~  i=1,2,...,p,\\
	f_r(x) & < & f_r(x_{\circ})-\alpha_r\|x-x_{\circ}\|   ~~~{\rm for ~some ~} r.
\end{eqnarray*}
\end{Definition}
\begin{Remark}{\rm
\label{rmrk1}\textbf{(Local) Lipschitz continuity $\Longrightarrow$ (local) quasi efficiency:}\\
If $f_i$ are locally Lipschitz at $x_{\circ} \in \Omega$, then it is a local  quasi efficient solution of $(VP).$ For, let $L_i \geqslant 0$ are local Lipschitz constant of $f_i$, then there exist  $\delta_i > 0$ such that
$$|f_i(x) - f_i(x_{\circ})| \leqslant L_i \|x-x_{\circ}\|~ ~ ~\forall ~x \in B(x_{\circ},\delta_i)\cap \Omega. $$
 Choose $\alpha_i > L_i$ and $\displaystyle \delta= \min_{1\leqslant i \leqslant p} \delta_i$, then for any $x \in \Omega \cap B(x_{\circ},\delta)$ the following can not hold
 \begin{eqnarray*}
\label{les}	
	f_i(x) &\leqslant& f_i(x_{\circ}) -\alpha_i \|x-x_{\circ}\|  ~~~\forall~  i=1,2,...,p.	
\end{eqnarray*}
Hence,  $x_{\circ}$ is  a local quasi efficient solution.}
\end{Remark}
\begin{Remark}
\label{rmrk-heart}{\rm
  The usual efficiency in scalar case coincides with the usual minimum. Thus, the quasi efficiency in scalar case, called quasi minimum would be a weaker generalized notion of the usual minimum. We claim that the quasi minimum is not a true generalization. Though, the quasi minimum implies lower semicontinuity. \\
  For,  let $x_\circ \in X$ be a  quasi minimum  (in the sense of Definition \ref{quasi-1}) for $(MP)$.  Then there exists $\alpha> 0$  such that for all $x \in  X$,
  \begin{eqnarray}
\label{lqes-core-1}	
	f(x) \geqslant  f(x_\circ) -\alpha  \|x-x_\circ\|.	
\end{eqnarray}
In view of Remark \ref{rmrk-3}, we see that the condition (\ref{lqes-core-1}) of the quasi minimum implies lower semicontinuity. However, if we consider the function $f: \mathbb R \rightarrow \mathbb R$ defined by $f(x):=x$ for $x \leqslant 0$ and $f(x):=1$ otherwise, then $f$ is neither continuous nor has a finite lower bound on $\mathbb R$, but has a quasi minimum at $x_{\circ}= 0$  for  $\alpha=1$. Thus, the quasi minimum does not imply continuity, and it does not imply finite lower bound also, hence disqualify to be WGM.}
\end{Remark}
   \begin{Remark}\rm
  It may be noted that \textit{$\epsilon-$efficiency} of \cite{Lorid84} and \textit{$\epsilon-$weak minima} of \cite{Deng} implies $\epsilon-$minima of the scalar case for some component of the vector function. In scalar case, both the notions coincide with $\epsilon-$minimum. Hence, they may qualify for weaker generalizations of the usual efficiency provided the component functions are lower semicontinuous. However, until now, it is not known how to characterize true weaker generalizations of the usual efficiency.
   \end{Remark}
 \begin{Remark}
\label{rmrk2} {\rm It is easy to see that
if $f_i$ are Lipschitz on $ \Omega$, then every point of $\Omega$ is a  quasi efficient solution of $(VP).$
One can easily observe that both the examples illustrated in Remark 3 of \cite{Bhatia06} have involved Lipschitz functions over $x \geqslant 0$. Hence, every point is a quasi efficient solution and hence a local quasi efficient  solution.}
\end{Remark}
\textbf{Hence, whether we use the criteria of WGM or not, we see that the quasi (or local quasi ) efficient solution is not a true generalization of efficient (or local efficient) solution.}\\

 In view of Remark \ref{rmrk1}, Theorem 2 (\textit{necessary optimality conditions}) of \cite{Bhatia06}, having a gap (a term of Clarke normal cone corresponding to the set constraint $X$ is  absent), can be corrected. The corrected version of Theorem 2 of \cite{Bhatia06} reads as follows.
\begin{Theorem}
\label{thm2}
Suppose that $x_{\circ} $  is   feasible point of  $(VP)$. Assume that $f_i, i =1,..p ~ {\rm and}  ~g_j, j=1,..m$  are locally Lipschitz at $x_{\circ}$. Then there exist $\alpha \in int(\mathbb R^p_+)$ and  $\lambda \in \mathbb R^p_+$  and  $\mu_j \in \mathbb R^m_+$ such that
\begin{eqnarray}
\label{eqn1}
0\in  \sum_{i=1}^p \lambda_i \partial f_i(x_{\circ}) + \sum_{j=1}^m \mu_j \partial g_j(x_{\circ}) +\sum_{i=1}^p \lambda_i \alpha_i \mathbb B + N_{X }(x_{\circ})\\
\label{eqn2}
~~~~{\rm and}~~~~
\mu_j g_j(x_{\circ})=0,~ \forall~ j =1,..m.
\end{eqnarray}
\end{Theorem}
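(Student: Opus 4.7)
The plan is to reduce the statement to a standard nonsmooth Fritz--John type multiplier rule applied to an auxiliary vector optimization problem, and then recover the ball term $\sum_i\lambda_i\alpha_i\mathbb{B}$ via the Clarke sum rule. Since each $f_i$ is locally Lipschitz at $x_\circ$ with constant $L_i$, I would pick any $\alpha\in\mathrm{int}(\mathbb{R}^p_+)$ with $\alpha_i>L_i$; by Remark \ref{rmrk1} the point $x_\circ$ is then a local quasi efficient solution on some ball $B(x_\circ,\delta)$. Introduce the auxiliary objectives
\[
F_i(x):=f_i(x)+\alpha_i\|x-x_\circ\|,\qquad i=1,\ldots,p,
\]
which are locally Lipschitz at $x_\circ$ and satisfy $F_i(x_\circ)=f_i(x_\circ)$. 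A direct rewriting of Definition \ref{quasi-1} shows that the impossibility of the quasi efficiency system for $x\in\Omega\cap B(x_\circ,\delta)$ is equivalent to the impossibility of $F_i(x)\leqslant F_i(x_\circ)$ for all $i$ together with $F_r(x)<F_r(x_\circ)$ for some $r$; hence $x_\circ$ is a local efficient solution of the auxiliary vector problem with objective $F$, inequality constraints $g_j\leqslant 0$, and set constraint $X$.

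Next, I would invoke the standard Clarke nonsmooth multiplier rule for local efficient solutions with inequality and abstract set constraints: there exist $\lambda\in\mathbb{R}^p_+$ and $\mu\in\mathbb{R}^m_+$ with
\[
0\in\sum_{i=1}^p\lambda_i\,\partial F_i(x_\circ)+\sum_{j=1}^m\mu_j\,\partial g_j(x_\circ)+N_X(x_\circ),\qquad \mu_j g_j(x_\circ)=0.
\]
Precisely here the Clarke normal cone term enters, filling the gap in the original Theorem 2 of \cite{Bhatia06}. Finally, applying the Clarke sum rule together with the identity $\partial\|\cdot-x_\circ\|(x_\circ)=\mathbb{B}$, I obtain $\partial F_i(x_\circ)\subseteq\partial f_i(x_\circ)+\alpha_i\mathbb{B}$; substituting this inclusion into the multiplier condition yields \eqref{eqn1} and \eqref{eqn2}.

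The main obstacle is the legitimacy of the second step: the Fritz--John multiplier rule for nonsmooth vector problems with an abstract set constraint $X$ must be obtained either by scalarizing via the max function $\max_i(F_i(x)-F_i(x_\circ))$ and invoking the Clarke scalar necessary condition, or by penalizing the constraint set through the distance $d_\Omega$; care must be taken so that the $N_X(x_\circ)$ contribution is isolated from the $\partial g_j$ terms. As stated, no constraint qualification is assumed, so nontriviality of $(\lambda,\mu)$ is not guaranteed; in fact the inclusion is rendered vacuous by the trivial choice $\lambda=0$, $\mu=0$, since $0\in N_X(x_\circ)$ automatically. This vacuity is perfectly in line with the paper's overall thesis that quasi efficiency, being a mere consequence of local Lipschitzness, carries no genuine optimality content beyond what local Lipschitzness already yields.
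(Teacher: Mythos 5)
Your proposal follows essentially the same route as the paper's own proof: use Remark \ref{rmrk1} to obtain local quasi efficiency, pass to the subsidiary problem with objectives $f_i(x)+\alpha_i\|x-x_\circ\|$ localized to $X\cap B(x_\circ,\delta)$ (where $N_{X\cap B(x_\circ,\delta)}(x_\circ)=N_X(x_\circ)$), invoke the Fritz John rule, and unpack $\partial(f_i+\alpha_i\|\cdot-x_\circ\|)(x_\circ)$ via the sum rule and $\partial\|\cdot-x_\circ\|(x_\circ)=\mathbb B$. Your closing observation that the statement omits the nontriviality $(\lambda,\mu)\neq 0$ (which the paper's proof does assert when citing Fritz John) is a fair and worthwhile remark, but it does not change the fact that the argument is the same.
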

\begin{proof}
In view of  Remark \ref{rmrk1}, there exist $\alpha \in int(\mathbb R^p_+)$ and $\delta > 0$ such that the following system has no solution
\begin{eqnarray*}
\label{les}	
	f_i(x) &\leqslant& f_i(x_{\circ}) -\alpha_i \|x-x_{\circ}\|  ~~~~  i=1,2,...,p,\\
	f_r(x) & < & f_r(x_{\circ})-\alpha_r\|x-x_{\circ}\|   ~~~{\rm for ~some ~}, r\\
	g_j(x) &\leqslant &0, ~~~~~~~~~j=1,..m\\
	x &\in& X \cap B(x_{\circ},\delta).
\end{eqnarray*}
In what follows,  $x_{\circ}$ is an efficient solution of the following subsidiary vector optimization problem $(VP')$ with locally Lipschitz data at $x_{\circ}$ and an abstract set constraint.
$$
\left.%
\begin{array}{lcl}
\min~f(x)=(f_1(x)+\alpha_1\|x-x_{\circ}\|,..,f_p(x)+\alpha_p\|x-x_{\circ}\|)\\
{\rm subject~ to }~g_j(x) \leqslant 0, j = 1,...m\\
x  \in X \cap B(x_{\circ},\delta).
\end{array}%
\right\}~~~~~~~~~~~~~~~~~~~~~~~~~~~~~(VP')
$$
 Noting that the $N_{X \cap B(x_{\circ},\delta)}(x_{\circ})= N_{X}(x_{\circ})$, the Fritz John necessary optimality conditions for $(VP')$ ensures the existence of
 $\lambda \in \mathbb R^p_+$  and  $\mu \in \mathbb R^m_+$ with $(\lambda, \mu) \neq 0$ such that
\begin{eqnarray}
\label{eqn3}
0\in  \sum_{i=1}^p   \lambda_i \partial ( f_i+\alpha_i\|x-x_{\circ}\|)  (x_{\circ})+ \sum_{j=1}^m \mu_j \partial g_j(x_{\circ}) + N_{X }(x_{\circ})\\
\nonumber {\rm and}~~~~\mu_j g_j(x_{\circ})=0,~ \forall~ j =1,..m.
 \end{eqnarray}
 Using the relation $ {\displaystyle  \partial ( \sum_{i=1}^p   s_i f_i)  (x_{\circ}) \subset  \sum_{i=1}^p   s_i \partial f_i (x_{\circ})},~~s_i \in \mathbb R$,  in (\ref{eqn3}), the result follows.
\end{proof}
\begin{Remark}\rm
\label{rmrk3}
Due to local Lipschitz continuity, the above Theorem holds for every feasible point $x$ in some neighbourhood of $x_{\circ}$.
\end{Remark}

\begin{Remark}\rm
\label{rmrk4}
    The proof of Theorem 3 of \cite{Bhatia06} uses Theorem 1 of \cite{Bhatia06} which is applicable if the function is lower semicontinuous and approximate convex. But in Theorem 3 of \cite{Bhatia06} lower semicontinuity is missing. Thus, in the statement of Theorem 3 of \cite{Bhatia06} assumption of lower semicontinuity must be added. However, the assumption of lower semicontinuity along with approximate convexity make functions locally Lipschitz, and hence invoking Remark \ref{rmrk1} the conclusion of Theorem 3 of \cite{Bhatia06} is trivial.
\end{Remark}
%
 \hspace*{0.5cm} In view of the above discussion, the  duality results are affected. In particular,  Theorem 5 of  \cite{Bhatia06} is applicable if the data is locally Lipschitz, which is missing in the assumption. However, considering the locally Lipschitz data  \cite[Theorem 5]{Bhatia06}  must be rechecked invoking Remark \ref{rmrk1} for local weak quasi efficiency \cite[Definition 9]{Bhatia06} and Theorem \ref{thm2}.
 \section{Concluding Remarks}
 In this note, first time, we have provided the criteria for weaker generalizations of the usual minimum. As a result, we have pointed out that the quasi efficiency \cite{Bhatia06} (having more than 16 citations in prestigious journals) is not appropriate and their results are either trivial or exaggerated. Further, we have corrected Theorem 2  of  \cite{Bhatia06}  as Theorem \ref{thm2} in the present note. A missing assumption in Theorem 3  of  \cite{Bhatia06} has been pointed out. We hope that the readers will be more attentive to use these and other generalizations without ignoring WGM criteria. \\\\
 \textbf{Acknowledgement:} The author is indebted to Professor Joydeep Dutta for insightful conversations.

\end{document}